\newtheorem{theorem}{Theorem}[section]
\newtheorem{proposition}[theorem]{Proposition}
\theoremstyle{definition}
\newtheorem{definition}[theorem]{Definition}
\theoremstyle{remark}
\newcommand{\RR}{ {\mathbb R} }
\newcommand{\ff}{\varphi}
\newcommand{\cA}{\mathcal{A}}
\newcommand{\kk}{\kappa}
\newcommand{\ee}{\varepsilon}
\begin{document}

\title[$S$-transform: case of vanishing mean]{Multiplication of free random variables and the $S$-transform: the case of vanishing mean}
\author[N. Raj Rao]{N. Raj Rao
$^{(*)}$}
\thanks{$^*$ Research supported by an Office of Naval Research Special Post-Doctoral Award under grant N00014-07-1-0269.}
\address{Massachusetts Institute of Technology, Department of Mathematics,
77 Massachusetts Avenue, Cambridge, MA 02141, USA.} \email{raj@mit.edu}
\author[R. Speicher]{{Roland Speicher}
$^{(\dagger)}$}
\thanks{$^\dagger\,$Research supported by Discovery and LSI grants from NSERC (Canada) and by
a Killam Fellowship from the Canada Council for the Arts}
\address{Queen's University, Department of Mathematics and Statistics,
Jeffery Hall, Kingston, ON, K7L 3N6, Canada} \email{speicher@mast.queensu.ca}

\begin{abstract}
This note extends Voiculescu's $S$-transform  based analytical machinery for free multiplicative convolution to the case where the mean of the probability measures vanishes. We show that with the right interpretation of the $S$-transform in the case of vanishing mean, the usual formula makes perfectly good sense.
\end{abstract}
\maketitle

\section{Introduction}

Multiplicative free convolution $\boxtimes$ was introduced by Voiculescu
\cite{voiculescu87a}, as an operation on probability measures to describe the
multiplication of free random variables. Since various classes of random matrices become
asymptotically free, the multiplicative free convolution is an important concept to deal
with the asymptotic eigenvalue distribution of products of random matrices
\cite{voiculescu92a,hiai00a}. A powerful analytical tool for an effective calculation of
$\boxtimes$ is Voiculescu's $S$-transform. In the usual presentations of the
$S$-transform it looks as if its definition breaks down in the case where the considered
random variable has mean zero. Since interesting cases, like the multiplicative free
convolution of a semicircle with a free Poisson distribution (corresponding in the random
matrix language to the product of a Gaussian random matrix with an independent Wishart
matrix), fall into this class one gets the impression that the $S$-transform machinery
has a crucial flaw here. However, as we want to point out, this is only ostensive, and
with the right interpretation of the $S$-transform in the case of vanishing mean, the
usual formula makes perfectly good sense. We will show this by combinatorial arguments in
the setting where all moments exists. It stands to reason that one should be able to
establish this using purely analytical tools, for more general distributions for which
the combinatorial moment based arguments do not suffice.

\section{Main result: one mean is zero}

Let us first recall the definition of the $S$-transform. We start by working on an
algebraic level, further below we will address the question of positivity (which is
crucial for the definition of $\boxtimes$). So a random variable $x$ is an element in
some unital algebra $\cA$, which is equipped with a linear functional $\ff$ such that
$\ff(1)=1$. Our main interest is in the moments $\ff(x^n)$ of the random variable $x$. Of
course, of main interest for us are real-valued random variables $X:\Omega\to\RR$ in the
genuine sense, where $\ff$ is given by taking the expectation with respect to the
underlying probability measure,
$$\ff(X^n)=\int_\Omega X(\omega)^ndP(\omega).$$
In our algebraic frame we have to restrict to situations where moments determine the
distribution uniquely; let us, however, remark that by analytic tools much of the theory of
multiplicative free convolution can be extended to classes of probability measures with no
assumption on the existence of moments.

\begin{definition}
Let $x$ be a random variable with $\ff(x)\not=0$. Then its \emph{$S$-transform} $S_x$ is
defined as follows. Let $\chi$ denote the inverse under composition of the series
$$\psi(z):=\sum_{n=1}^\infty \ff(x^n)z^n,$$
then
$$S_x(z):=\chi(z)\cdot\frac{1+z}z.$$
\end{definition}

The above definition has to be understood on the level of formal power series. Note that
$\ff(x)\not=0$ ensures that the inverse of $\psi$ exists as formal power series in $z$.
Furthermore, $\psi$ can be recovered from $S_x$ and thus the $S$-transform contains all
information about the moments of the considered random variable.

The relevance of the $S$-transform in free probability theory is due to the following theorem
of Voiculescu \cite{voiculescu87a}.

\begin{theorem}\label{thm:S-eins}
If $x$ and $y$ are free random variables such that $\ff(x)\not=0$ and $\ff(y)\not=0$,
then we have
$$S_{xy}(z)=S_x(z)\cdot S_y(z).$$
\end{theorem}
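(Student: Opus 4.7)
The plan is to give a combinatorial proof using free cumulants and non-crossing partitions. The starting point is the characterization of freeness by the vanishing of mixed free cumulants: if $x$ and $y$ are free then $\kk_n(z_1,\ldots,z_n)=0$ whenever the arguments mix $x$'s and $y$'s. Applied to $\ff((xy)^n)$, which one expands via the moment-cumulant formula on the $2n$-tuple $(x,y,x,y,\ldots,x,y)$ and the non-crossing partitions of $[2n]$, this vanishing collapses the sum to one indexed by pairs $(\pi,\sigma)$ of non-crossing partitions of $[n]$ with $\sigma\leq K(\pi)$. Summing out $\sigma$ yields the key identity
$$\ff\bigl((xy)^n\bigr)=\sum_{\pi\in \NC(n)} \kk_\pi[x]\cdot \ff_{K(\pi)}[y],$$
where $K(\pi)$ is the Kreweras complement, $\kk_\pi[x]=\prod_{V\in\pi}\kk_{|V|}(x,\ldots,x)$, and $\ff_{K(\pi)}[y]=\prod_{W\in K(\pi)}\ff(y^{|W|})$.

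Next I would lift this partition identity to the level of formal power series. Introduce the cumulant series $C_x(z)=1+\sum_{n\geq 1}\kk_n(x,\ldots,x)z^n$ and the moment series $M_x(z)=1+\psi_x(z)$, linked by $M_x(z)=C_x(zM_x(z))$, together with their analogues for $y$ and for $xy$. Grouping the above sum by the block structures of $\pi$ and $K(\pi)$, and exploiting the duality $|\pi|+|K(\pi)|=n+1$ enjoyed by any Kreweras pair, the partition identity becomes a functional equation relating $\psi_{xy}$, $\psi_x$ and $\psi_y$. Since $\ff(x)\ff(y)\neq 0$, each of the three series has invertible linear part, so one may pass to the compositional inverses $\chi_x,\chi_y,\chi_{xy}$, and the functional equation takes the clean form
$$\chi_{xy}(z)=\chi_x(z)\,\chi_y(z)\,\frac{1+z}{z},$$
which, upon multiplying both sides by $(1+z)/z$, is exactly $S_{xy}(z)=S_x(z)\,S_y(z)$.

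The main obstacle is the second step: the generating-function bookkeeping that converts the double sum indexed by a Kreweras pair $(\pi,K(\pi))$ into a genuine product of series. The crucial observation is that the extra factor $(1+z)/z$ in the definition of the $S$-transform is manufactured precisely to absorb the $+1$ in the Kreweras block-count identity; once this is recognized, the rest is routine manipulation of formal power series under composition. Conceptually, this is also what explains why the definition of $S$ needs to be reinterpreted, rather than abandoned, in the vanishing-mean case treated in the remainder of the paper: the combinatorial skeleton of the argument survives, but the compositional inverse has to be sought one order higher.
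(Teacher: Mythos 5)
Your first step is correct and standard: expanding $\ff\bigl((xy)^n\bigr)$ over $\NC(2n)$, using the vanishing of mixed cumulants to reduce to pairs consisting of $\pi\in\NC(n)$ on the $x$-positions and $\sigma\le K(\pi)$ on the $y$-positions, and then summing out $\sigma$ blockwise does yield $\ff\bigl((xy)^n\bigr)=\sum_{\pi\in\NC(n)}\kk_\pi[x]\,\ff_{K(\pi)}[y]$. This is the Nica--Speicher combinatorial route, and it is organized differently from what the paper actually does: the paper's argument (given for Theorem \ref{thm:S}, but explicitly noted to specialize to the case $\ff(x)\not=0$) never sums out the Kreweras complement globally. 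Instead it introduces the auxiliary alternating series $M_1(z)=\sum_n\ff\bigl(y(xy)^n\bigr)z^n$ and $M_2(z)=\sum_n\ff\bigl(x(yx)^n\bigr)z^n$, derives the closed system \eqref{eq:eins}--\eqref{eq:drei} by splitting off the block containing the first element, composes with the inverse $\chi$ of $\psi_{xy}$, and finishes with the identity $C[zS(z)]=z$.

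The genuine gap is in your second step. The passage from the Kreweras-pair sum to $\chi_{xy}(z)=\chi_x(z)\,\chi_y(z)\,\frac{1+z}{z}$ is the entire content of the theorem, and ``grouping by block structures'' together with $|\pi|+|K(\pi)|=n+1$ does not produce it: that counting identity only makes the extra factor $(1+z)/z$ plausible on dimensional grounds, it does not show that the compositional inverses multiply. A sum of the form $\sum_{\pi\in\NC(n)}\prod_{V\in\pi}a_{|V|}\prod_{W\in K(\pi)}b_{|W|}$ is a boxed convolution of two series, and the assertion that the map $f\mapsto zS_f(z)$ carries boxed convolution to pointwise multiplication is a theorem in its own right with a nontrivial proof --- not a routine manipulation of formal power series under composition. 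To close the gap you must either prove that Fourier-transform property of the boxed convolution (the Nica--Speicher route), or else abandon the global summation over $\sigma\le K(\pi)$ and set up the functional equations for $M_1$, $M_2$, $M_{xy}$ as the paper does; in the latter case the relation $C[zS(z)]=z$, which your outline never invokes, is the indispensable link between the cumulant series and the $S$-transform.
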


Note that in the above setting $\ff(xy)=\ff(x)\ff(y)\not=0$, thus the $S$-transform of
$xy$ is also well-defined and the above theorem allows to get the moments of $xy$ out of
the moments of $x$ and the moments of $y$.

Note that in this theorem we do not require $x$ or $y$ to be real or positive random
variables. In this formulation, the theorem is true in full generality, since it is
essentially a statement about mixed moments of free random variables. However, if one
wants to extract a probability distribution out of the $S$-transform, then one has to
make special requirements concerning $x$ and $y$. In particular, one would like to start
with a situation where $\ff(x^n)$ are the moments of a probability measure $\mu$ on $\RR$
(i.e., $x$ should be a real-valued random variable, i.e., a selfadjoint operator) and
where $\ff(y^n)$ are the moments of a probability measure $\nu$ on $\RR$ (i.e., $y$
should be real-valued random variable or selfadjoint operator). Since $x$ and $y$ do not
commute, $xy$ will not be a selfadjoint operator, thus it is not clear (and in general
will not be the case) that $\ff((xy)^n)$ are the moments of a probability measure.
However, if $x$ is a positive random variable, then $\sqrt x$ makes sense and the
operator $\sqrt x y \sqrt x$ is selfadjoint and has the same moments as $xy$. Thus in
this case we know that the $\ff((xy)^n)$ are the moments of a probability measure on
$\RR$; this is called the \emph{multiplicative free convolution of $\mu$ and $\nu$} and
denoted by $\mu\boxtimes \nu$. So the operation $\mu,\nu\mapsto\mu\boxtimes\nu$ is
defined for probability measures $\mu$, $\nu$ on $\RR$ if at least one of them is
supported on the positive real line.

Often one restricts both factors to be supported in $\RR_+$. In this case the above theorem for
the $S$-transform gives a solution to the problem of calculating $\mu\boxtimes\nu$. But there
are also many interesting cases where only one factor is supported on $\RR_+$. In this case the
description of $\mu\boxtimes\nu$ in terms of the $S$-transform seems to have the following
flaw. In order to be able to invert the power series $\psi(z)=\sum_{n=1}^\infty \ff(x^n)z^n$
one needs a non-vanishing linear term, which means that the mean $\ff(x)$ of the variable
should be non-zero. For measures supported on $\RR_+$ this is (with the exception of the
uninteresting case $x=0$) satisfied, but for measures supported on $\RR$ the mean $\ff(x)$
might be zero and in such a case it seems that the $S$-transform cannot be used to calculate
$\mu\boxtimes\nu$. One should note that this is not an artificial situation, but covers such
common cases like taking the multiplicative free convolution of a semicircle with a
distribution supported on $\RR_+$. One could of course try to approximate mean zero situations
with non mean zero ones; however, the main purpose of our note is to point out that actually
the failure of the $S$-transform description in the mean zero case is just ostensive; with the
right interpretation, the above theorem can also be used in that case to determine
$\mu\boxtimes\nu$.

So let us consider the situation that $\ff(x)=0$ for some selfadjoint element. Let us first see
whether we still can make sense out of the relation $S_{xy}(z)=S_x(z)\cdot S_y(z)$ and whether
this still allows to recover all moments of $xy$ uniquely out of the moments of $x$ and of $y$.
Again we exclude the uninteresting case that $x=0$; hence we know that $\ff(x^2)>0$ (otherwise,
by positivity, $x$ would be equal to $0$) and thus our series $\psi(z)$ starts with a multiple
of $z^2$. This means that although it cannot be inverted by a power series in $z$ it can be
inverted by a power series in $\sqrt z$. This inverse is not unique, but there are actually two
choices (which correspond to choosing a branch of $\sqrt z$); however, this ambiguity does not
affect the final result for the moments of $xy$, if $y$ has non-zero mean. Let us be more
specific about this in the following proposition.

\begin{proposition}\label{prop:eins}
{\rm (1)} Let $\psi$ be a formal power series of the form
$$\psi(z)=\sum_{n=2}^\infty \alpha_nz^n=\alpha_2 z^2+\alpha_3 z^3+\cdots$$
with $\alpha_2\not=0$. Then there exist exactly two power series in $\sqrt z$ which
satisfy
$$\psi(\chi(z))=z.$$
If we denote these solutions by
$$\chi(z)=\sum_{k=1}^\infty \beta_k z^{k/2}=\beta_1\sqrt z+\beta_2 z+\beta_3 {\sqrt z\,}^{3}+\cdots$$
and
$$\tilde\chi(z)=\sum_{k=1}^\infty \tilde\beta_k z^{k/2}=\tilde\beta_1\sqrt z
+\tilde\beta_2 z+\tilde\beta_3 {\sqrt z\,}^{3}+\cdots$$
then their coefficients are
related by
$$\tilde\beta_k=(-1)^k\beta_k.$$

{\rm (2)} Let $\psi$ be as above and consider the two corresponding $S$-transforms
$$S(z)=\chi(z)\cdot \frac{1+z}z$$
and
$$\tilde S(z)=\tilde\chi(z)\cdot\frac{1+z}z$$
Then $S(z)$ and $\tilde S(z)$ are of the form
$$S(z)=\gamma_{-1} \frac 1{\sqrt z}+\sum_{k=0}\gamma_k z^{k/2}$$
and
$$\tilde S(z)=\tilde \gamma_{-1} \frac 1{\sqrt z}+\sum_{k=0}\tilde\gamma_k z^{k/2},$$
where
$$\tilde \gamma_k=(-1)^k\gamma_k.$$

{\rm (3)} Let $x$ be a random variable with vanishing mean and denote by $S_x$ and
$\tilde S_x$ the corresponding two $S$-transforms, as constructed in (1) and (2). Let $y$
be another random variable, with non-vanishing mean so that its $S$-transform is of the
form
$$S_y(z)=\sum_{k=0}^\infty \delta_k z^k=\delta_0 + \delta_1 z+\delta_2 z^2+\cdots$$
Then the two series $S_{xy}(z):=S_x(z)S_y(z)$ and $\tilde S_{xy}(z):=\tilde S_x(z)
S_y(z)$ are of the form
$$S_{xy}(z)=\ee_{-1} \frac 1{\sqrt z}+\sum_{k=0}\ee_k z^{k/2}$$
and
$$\tilde S_{xy}(z)=\tilde\ee_{-1} \frac 1{\sqrt z}+\sum_{k=0}\tilde\ee_k z^{k/2}$$
and related by
$$\tilde \ee_k=(-1)^k\ee_k.$$
If we put
$$\chi_{xy}(z)=S_{xy}(z)\cdot \frac z{1+z},\qquad
\tilde\chi_{xy}(z)=\tilde S_{xy}(z)\cdot \frac z{1+z}$$ and denote by $\psi_{xy}$ the
unique solution of $$\psi_{xy}(\chi_{xy}(z))=z$$ and by $\tilde \psi_{xy}$ the unique
solution of $$\tilde\psi_{xy}(\tilde \chi_{xy}(z))=z,$$ then we have
$$\psi_{xy}(z)=\tilde\psi_{xy}(z).$$

\end{proposition}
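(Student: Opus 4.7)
The plan is to substitute $w=\sqrt z$ throughout and reduce every assertion about half-integer series to a standard statement about ordinary power series in $w$. For part (1), set $f(w):=\chi(w^2)=\sum_{k\ge1}\beta_kw^k$; then $\psi(\chi(z))=z$ becomes $\alpha_2f(w)^2+\alpha_3f(w)^3+\cdots=w^2$, which forces $\beta_1^2=1/\alpha_2$ and then determines $\beta_2,\beta_3,\ldots$ recursively once a sign for $\beta_1$ is chosen, yielding exactly two solutions. To identify the second one with $\tilde\beta_k=(-1)^k\beta_k$, I would observe that after the substitution $\psi$ is a power series in $w^2$, so if $f(w)$ solves $\psi(f(w))=w^2$, then so does $f(-w)=\sum_k(-1)^k\beta_kw^k$; having leading coefficient $-\beta_1$, it must be the solution corresponding to the opposite sign choice.

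Part (2) is a direct unpacking: from $S(z)=\chi(z)/z+\chi(z)$ one reads off $\gamma_{-1}=\beta_1$, $\gamma_0=\beta_2$, and $\gamma_k=\beta_{k+2}+\beta_k$ for $k\ge1$. Since $k+2$ and $k$ have the same parity, the relation $\tilde\beta_k=(-1)^k\beta_k$ immediately yields $\tilde\gamma_k=(-1)^k\gamma_k$.

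For part (3), the key observation is that $S_y(z)$ is a power series in integer powers of $z$, so multiplying by $S_x(z)$ preserves the half-integer shift and a direct bookkeeping gives $\ee_k=\sum_{m+2j=k}\gamma_m\delta_j$, where every $m$ in the sum satisfies $m\equiv k\pmod 2$; hence $\tilde\ee_k=(-1)^k\ee_k$. To prove $\psi_{xy}=\tilde\psi_{xy}$, I again switch to $w=\sqrt z$ and set $h(w):=\chi_{xy}(w^2)$, which is an ordinary power series in $w$ with leading term $\ee_{-1}w$; this leading coefficient $\ee_{-1}=\gamma_{-1}\delta_0=\beta_1/\ff(y)$ is nonzero because $\beta_1^2=1/\ff(x^2)\ne 0$ and $\delta_0=1/\ff(y)\ne 0$. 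Consequently $h$ has a unique compositional inverse $h^{-1}$ in the ring of ordinary formal power series, and the identity $\psi_{xy}(h(w))=w^2$ forces $\psi_{xy}(u)=\bigl(h^{-1}(u)\bigr)^2$, which is an ordinary power series in $u$. Performing the parallel construction with $\tilde\chi_{xy}$ and using $\tilde\ee_k=(-1)^k\ee_k$ gives $\tilde h(w)=h(-w)$, so $\tilde h^{-1}(u)=-h^{-1}(u)$, and therefore $\tilde\psi_{xy}(u)=\bigl(\tilde h^{-1}(u)\bigr)^2=\bigl(h^{-1}(u)\bigr)^2=\psi_{xy}(u)$, as desired.

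The one step that deserves honest care is verifying that $\psi_{xy}$ really is an integer-power series, so that the phrase \emph{unique solution} in the statement can be interpreted in the classical formal-power-series sense; this is exactly what the formula $\psi_{xy}(u)=\bigl(h^{-1}(u)\bigr)^2$ guarantees, and it also explains conceptually why the sign ambiguity from part (1) disappears in the end: it amounts to replacing $\sqrt z$ by $-\sqrt z$ in the inverse $h^{-1}$, which is annihilated by the squaring.
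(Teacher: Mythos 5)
Your proof is correct, and while it shares the same computational skeleton as the paper's (after equating coefficients, $\alpha_2\beta_1^2=1$ gives two sign choices and the remaining $\beta_k$ are determined recursively since $\alpha_2\beta_1\neq0$), you handle the two key points differently and, I would say, more cleanly. For the relation $\tilde\beta_k=(-1)^k\beta_k$ the paper runs an induction on the recursion, tracking how the sign of $\beta_1$ propagates; you instead observe that $\psi(f(w))=w^2$ is invariant under $w\mapsto -w$, so $f(-w)$ is automatically the second solution --- a symmetry argument that avoids the bookkeeping entirely. For part (3) the paper says only that it ``follows by reverting the arguments from the first and second part,'' whereas you make this explicit: the substitution $u=h(w)$ with $h(w)=\chi_{xy}(w^2)$ (invertible since $\ee_{-1}=\gamma_{-1}\delta_0\neq0$) yields the closed form $\psi_{xy}(u)=\bigl(h^{-1}(u)\bigr)^2$, and the parity relation gives $\tilde h(w)=h(-w)$, hence $\tilde h^{-1}=-h^{-1}$, so the squaring kills the sign. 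This buys two things the paper leaves implicit: a verification that $\psi_{xy}$ really is a series in integer powers of $z$ (so that ``the unique solution'' is well posed in the ordinary formal-power-series sense), and a conceptual explanation of why the branch ambiguity of $\sqrt z$ disappears in the moments of $xy$.
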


\begin{proof}
(1) The equation $\psi(\chi(z))=z$ means
$$z=\sum_{n=2}^\infty
\alpha_n \bigl(\sum_{k=1}^\infty \beta_k z^{k/2}\bigr)^n=
\sum_{n=2}^\infty\sum_{k_1,\dots,k_n=1}^\infty \alpha_n \beta_{k_1}\cdots\beta_{k_n}
z^{(k_1+\cdots+k_n)/2}.$$ By equating the coefficients of powers of $\sqrt z$ this is
equivalent to the system of equations
$$1=\alpha_2\beta_1^2$$
and
$$0=\sum_{n=2}^r\sum_{k_1,\dots,k_n=1\atop k_1+\cdots+k_n=r}^r \alpha_n \beta_{k_1}\cdots\beta_{k_n}$$
for all $r>2$.

The first equation has the two solutions
$$\beta_1=\frac 1{\sqrt\alpha_2}$$
and
$$\tilde \beta_1=-\frac 1{\sqrt\alpha_2}=-\beta_1.$$
Writing the other equations in the form
$$0=2\alpha_2\beta_{r-1}\beta_1+\sum_{n=2}^r\sum_{k_1,\dots,k_n=1\atop
{k_1+\cdots+k_n=r \atop \cdots}} \alpha_n \beta_{k_1}\cdots\beta_{k_n}$$ (where the
$\cdots$ indicate that we exclude in the second sum the cases $k_1=1,k_2=r-1$ and
$k_1=r-1,k_2=1$) one sees that the values of $\beta_n$ for $n>1$ are recursively
determined by $\beta_1$ and the $\alpha$'s. By induction, it follows that $\tilde
\beta_1=-\beta_1$ results in $\tilde\beta_k=(-1)^k\beta_k$ for all $k$.

(2) This is clear, since we have $\gamma_k=\beta_{k+2}+\beta_k$ for all
$k=-1,0,1,\cdots,$ where we set $\beta_{-1}:=\beta_0:=0$.

(3) This follows by reverting the arguments from the first and second part.

\end{proof}

\begin{definition}\label{def:S}
Let $x$ be a random variable with $\ff(x)=0$ and $\ff(x^2)\not=0$. Then its two
\emph{$S$-transforms} $S_x$ and $\tilde S_x$ are defined as follows. Let $\chi$ and
$\tilde \chi$ denote the two inverses under composition of the series
$$\psi(z):=\sum_{n=1}^\infty \ff(x^n)z^n=\ff(x^2)z^2 +\ff(x^3)z^3+\cdots,$$
then
$$S_x(z):=\chi(z)\cdot\frac{1+z}z\qquad\text{and}\qquad
\tilde S_x(z):=\tilde \chi(z)\cdot\frac{1+z}z.$$ Both $S_x$ and $\tilde S_x$ are formal
series in $\sqrt z$ of the form
$$\gamma_{-1} \frac
1{\sqrt z}+\sum_{k=0}^\infty\gamma_k z^{k/2}$$
\end{definition}

Now we stand ready to formulate our main theorem

\begin{theorem}\label{thm:S}
Let $x$ and $y$ be free random variables such that $\ff(x)=0$, $\ff(x^2)\not=0$ and
$\ff(y)\not=0$. By $S_x$ and $\tilde S_x$ we denote the two $S$-transforms of $x$.  Then
$$S_{xy}(z)=S_x(z)\cdot S_y(z)\qquad\text{and}\qquad
\tilde S_{xy}(z)=\tilde S_x(z)\cdot S_y(z)$$ are the two $S$-transforms of $xy$.
\end{theorem}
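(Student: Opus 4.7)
My plan is to deduce Theorem~\ref{thm:S} from Voiculescu's classical Theorem~\ref{thm:S-eins} by a perturbation argument, using Proposition~\ref{prop:eins} to organize the two-branch ambiguity in the limit.

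First, for $\lambda\in\RR\setminus\{0\}$ I introduce the shifted variable $x_\lambda:=x+\lambda\cdot 1$. Scalars lie in every unital subalgebra, so freeness is preserved; moreover $\ff(x_\lambda)=\lambda\neq0$, so Theorem~\ref{thm:S-eins} gives
\[
S_{x_\lambda y}(z)=S_{x_\lambda}(z)\cdot S_y(z),
\]
equivalently $z\,\chi_{x_\lambda y}(z)=(1+z)\,\chi_{x_\lambda}(z)\,\chi_y(z)$. Since $\ff((x_\lambda y)^n)$ is a polynomial in $\lambda$ (obtained by expanding each factor $x+\lambda$ and using linearity of $\ff$), the series $\psi_{x_\lambda y}(z)$ lies in $\RR[\lambda][[z]]$, and evaluating at $\lambda=0$ returns the genuine $\psi_{xy}(z)$.

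Next I recast the product rule as a functional equation for $\psi_{x_\lambda y}$. Substituting $z\mapsto\psi_{x_\lambda y}(z)$ in the $\chi$-identity and using $\chi_{x_\lambda y}(\psi_{x_\lambda y}(z))=z$ yields
\[
\chi_{x_\lambda}\!\bigl(\psi_{x_\lambda y}(z)\bigr)\,\chi_y\!\bigl(\psi_{x_\lambda y}(z)\bigr)\,\frac{1+\psi_{x_\lambda y}(z)}{\psi_{x_\lambda y}(z)}\;=\;z,
\]
which determines $\psi_{x_\lambda y}$ uniquely for $\lambda\neq0$. I then pass to $\lambda\to0$ coefficient-wise in $z$: individual summands on the left behave like negative powers of $\lambda$, but $\psi_{x_\lambda y}(z)$ carries the linear $\lambda$ factor $\lambda\ff(y)$ in its lowest $z$-coefficient, and the composition of $\chi_{x_\lambda}$ with $\psi_{x_\lambda y}$ produces exactly the cancellations that render each coefficient's limit finite. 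In the limit, $\chi_{x_\lambda}$ is replaced by one of the two $\sqrt z$-Puiseux branches $\chi_x$ or $\tilde\chi_x$ of Proposition~\ref{prop:eins}(1), the choice being governed by which root of the quadratic leading approximation $\lambda u+\ff(x^2) u^2=z$ is tracked as $\lambda\to0$.

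The resulting limiting equation is precisely the functional relation defining $\psi_{xy}$ from $S_x\cdot S_y$ (respectively from $\tilde S_x\cdot S_y$) in Proposition~\ref{prop:eins}(3), and by that proposition both branches produce the same $\psi_{xy}\in\RR[[z]]$. Since the first paragraph identifies this common series with the actual moment generating function of $xy$, both $S_xS_y$ and $\tilde S_xS_y$ qualify as $S$-transforms of $xy$ in the sense of Definition~\ref{def:S}. The main obstacle is justifying the $\lambda\to0$ limit rigorously: the $\chi$-side is singular at $\lambda=0$, so one must organize the computation within the ring of formal Puiseux series in $\sqrt z$ with coefficients in $\RR[\lambda,\lambda^{-1}]$ and carefully track the branch selection through the merger of the two algebraic inverses of $\psi_{x_\lambda}$ into the two $\sqrt z$-Puiseux series of Proposition~\ref{prop:eins}(1).
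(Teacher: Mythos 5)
Your strategy --- shift to $x_\lambda:=x+\lambda 1$, invoke Theorem~\ref{thm:S-eins} for $\lambda\neq0$, and let $\lambda\to0$ --- is genuinely different from the paper's proof, which never perturbs: it derives the relations \eqref{eq:eins}--\eqref{eq:drei} for the auxiliary series $M_1(z)=\sum\ff(y(xy)^n)z^n$ and $M_2(z)=\sum\ff(x(yx)^n)z^n$ directly from freeness, substitutes one of the two inverses $\chi$ of $\psi_{xy}$, and reads off $zS_y(z)=\chi(z)M_2(\chi(z))$ and $zS_x(z)=\chi(z)M_1(\chi(z))$, so the two-branch bookkeeping reduces to a single binary choice. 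Your route, however, has a genuine gap that you flag yourself but do not close: the passage to the limit $\lambda\to0$ is exactly the content of the theorem, and it is asserted rather than proved. Concretely, the coefficient of $z^n$ in $\chi_{x_\lambda}$ has a pole of order $2n-1$ at $\lambda=0$, so the middle factor $\chi_{x_\lambda}\bigl(\psi_{x_\lambda y}(w)\bigr)$ of your functional equation is termwise singular, and the claim that ``the composition produces exactly the cancellations that render each coefficient's limit finite'' is precisely what must be established for every coefficient. A low-order check confirms the cancellation is real (the $w^2$-coefficient of the composite works out to $\lambda(\ff(y^2)-\ff(y)^2)$), but the general statement needs an inductive argument or an independent identity for this composite --- which is, in effect, what the paper's identity $zS_x(z)=\chi(z)M_1(\chi(z))$ supplies.

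A second, related gap concerns branch identification. Even granting regularity at $\lambda=0$, you must show that $h_0(w):=\lim_{\lambda\to0}\chi_{x_\lambda}(\psi_{x_\lambda y}(w))$ equals $\chi_x(\psi_{xy}(w))$ or $\tilde\chi_x(\psi_{xy}(w))$ for one of the two Puiseux branches of Proposition~\ref{prop:eins}(1). There is no sense in which the ordinary power series $\chi_{x_\lambda}$ themselves converge to a series in $\sqrt z$ (their coefficients diverge), so ``$\chi_{x_\lambda}$ is replaced by one of the two branches'' is only a heuristic about the leading quadratic $\lambda u+\ff(x^2)u^2=z$; the correct statement is about the composite, and it would follow from $\psi_x(h_0(w))=\psi_{xy}(w)$ together with a classification of the power-series solutions $h$ of $\psi_x\circ h=\psi_{xy}$ --- again an argument you would have to supply. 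If you want to rescue the perturbation route, the missing lemma is that $\chi_{x_\lambda}\circ\psi_{x_\lambda y}$ has coefficients in $\RR[\lambda]$ and that its specialization at $\lambda=0$ is a compositional factorization of $\psi_{xy}$ through $\psi_x$; without it the proof is incomplete.
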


Note that $xy$ falls into the realm of Def. \ref{def:S}. We have
$$\ff(xy)=\ff(x)\ff(y)=0$$
and
$$\ff((xy)^2)=\ff(x^2)\ff(y)^2+\ff(x)^2\ff(y^2)-\ff(x)^2\ff(y)^2=\ff(x^2)\ff(y)^2\not=0.$$

Let us point out again that the ambiguity of having two $S$-transforms for $xy$ is not a
problem, because both of them will lead to the same moments for $xy$, according to Prop.
\ref{prop:eins}.

In order to prove our Theorem \ref{thm:S}, one might first inspect the usual proofs for
the $S$-transfom. One notices that all of them rely on the fact that $\ff(x)$ and
$\ff(y)$ both are not zero. None of the published proofs can be used directly for the
case where one variable is centered. However, at least the idea of the proof in
\cite{nica97a,speicher:book} can be adapted to our situation. The following proof is a
variant of the approach in  \cite{nica97a,speicher:book} (by avoiding an explicit use of
the incomplete boxed convolution) and can of course also be used to give a
straightforward proof in the usual case $\ff(x)\not=0$.

\begin{proof}

For a random variable $x$ we denote by
$$\psi_x(z):=\sum_{n=1}^\infty \ff(x^n)z^n$$
and
$$M_x(z):=1+\psi_x(z)$$
the corresponding moments series and by
$$C_x(z):=\sum_{n=1}^\infty\kk^x_nz^n$$
the corresponding cumulants series ($\kk_n^x=\kk_n(x,\dots,x)$ is here the $n$-th free
cumulant of $x$).

Consider now $x$ and $y$ as in the theorem. Then, in addition to the moment and cumulant series
of $x$, $y$, and $xy$, we consider also additional moments series of the form
$$M_1(x):=\sum_{n=0}^\infty \ff\bigl(y(xy)^n\bigr)z^n$$
and
$$M_2(z):=\sum_{n=0}^\infty \ff\bigl(x(yx)^n\bigr)z^n.$$
By the moment cumulant formula and by the fact that because of the freeness between $x$ and $y$
mixed cumulants in $x$ and $y$ vanish (see \cite{speicher:book} for the combinatorial theory of freeness),
it is quite straightforward to derive the following relations between these power series:
\begin{equation}\label{eq:eins}
M_{xy}(z)=M_{yx}(z)=C_y[zM_2(z)]+1
\end{equation}
\begin{equation}\label{eq:zwei}
M_1(z)=C_y[zM_2(z)]\cdot\frac{M_{xy}(z)}{zM_2(z)}
\end{equation}
\begin{equation}\label{eq:drei}
M_2(z)=C_x[zM_1(z)]\cdot\frac{M_{xy}(z)}{zM_1(z)}
\end{equation}
Note that all these relations are valid (and make sense as formal power series)
independent of whether $\ff(x)=0$ or not.

Let us note that we have the well-known relation $M(z)=1+C[zM(z)]$ between moment and
cumulant series, which shows, by replacing $z$ with $\chi(z)$, that $C[zS(z)]=z$. This
relation, which was observed in \cite{nica97a}, can be taken as an alternate definition
of $S(z)$. In the case where $\ff(y)\not=0$ this is just a relation between formal power
series in $z$. In the case $\ff(x)=0$, it has again to be read as formal power series in
$\sqrt z$. (Note that for the case $\ff(x)=0$, $\ff(x^2)\not=0$ we also have
$\kk_1^x=\ff(x)=0$ and $\kk_2^x=\ff(x^2)\not=0$, thus $C_x(z)$ starts also with a
quadratic term in $z$.)

Since $\ff(xy)=\ff(x)\ff(y)=0$, the moment series $\psi_{xy}$ has two inverses; let
$\chi$ be one of them. Replacing $z$ by $\chi(z)$ in \eqref{eq:eins} gives
$$1+z=M_{xy}(\chi(z))=C_y[\chi(z)M_2(\chi(z))]+1,$$
yielding that
\begin{equation}\label{eq:vier}
z=C_y[\chi(z)M_2(\chi(z))].
\end{equation}

Equation \eqref{eq:zwei} gives
$$zM_1(z)M_2(z)=C_y[zM_2(z)]\cdot M_{xy}(z).$$
Replacing $z$ by $\chi(z)$ gives
\begin{align*}
\chi(z)\cdot M_1\bigl(\chi(z)\bigr)\cdot M_2\bigl(\chi(z)\bigr)
&=C_y[\chi(z)M_2(\chi(z))]\cdot M_{xy}\bigl(\chi(z)\bigr)
\end{align*}
and hence
\begin{equation}\label{eq:funf}
\chi(z)\cdot M_1\bigl(\chi(z)\bigr)\cdot M_2\bigl(\chi(z)\bigr)=z\cdot(z+1)
\end{equation}

Since $zS_y(z)$ is the unique inverse of $C_y$ under composition we must have, by
\eqref{eq:vier} that
$$zS_y(z)=\chi(z)M_2(\chi(z)).$$

In the same way as for \eqref{eq:vier} we get
$$z=C_x[\chi(z)M_1(\chi(z))].$$
Since $C_x$ has exactly two power series in $\sqrt z$ as inverses under composition,
namely $zS_x(z)$ and $z\tilde S_x(z)$ we must have either
$$zS_x(z)=\chi(z)M_1(\chi(z))\qquad\text{or}\qquad
z\tilde S_x(z)=\chi(z)M_1(\chi(z)).
$$
Name $S_x$ and $\tilde S_x$ in such a way that we have the first equation.

Plugging this into \eqref{eq:funf} gives
$$zS_x(z)\cdot zS_y(z)=\chi(z)M_1(\chi(z))\cdot \chi(z)M_2(\chi(z))=\chi(z) z (z+1)
$$
and thus
$$S_x(z) S_y(z)=\chi(z)\frac {z+1}z=S_{xy}(z).$$
If we start with the other inverse, $\tilde \chi$, of $\psi_{xy}$, then we would end with the
other $S$-transform, $\tilde S_{xy}$.
\end{proof}

This proposition tells us that we can also use the formula $S_{xy}(z)=S_x(z)S_y(z)$ to
calculate the moment series of $xy$, for $x$ and $y$ free, in the case where at most one of
them has non-vanishing mean. Since this is the case in the situation where $y$ has a
distribution supported on $\RR_+$, we see that the $S$-transform is a useful tool for the
calculation of $\mu\boxtimes\nu$, whenever $\mu$ is a probability measure on $\RR$ and $\nu$ is
a probability measure on $\RR_+$, independent of whether $\mu$ has vanishing mean or not.

\section{Case where both means are zero}
One should also note that for the proof of our proposition it is important that at least
one of the two involved measures has non-vanishing mean. If both have vanishing mean then
the arguments break down. One should, however, note that this cannot be attributed to the
fact that there does not exist a probability measure with the moments of $xy$. Actually,
for $x,y$ free with $\ff(x)=0=\ff(y)$ we have by the definition of freeness that
$\ff((xy)^n)=0$ for all $n$, i.e., $xy$ has the same moments as the delta distribution
$\delta_0$. So one might say that the multiplicative free convolution of any two measures
with mean zero is $\delta_0$. However, on a formal level the situation for the
$S$-transform gets quite different here. As a concrete example, let us consider the
multiplicative free convolution of the semicircle with some $\nu$. The $S$-transform of
the semicircle $\mu$ (of variance 1) is
$$S_\mu(z)=\frac 1{\sqrt z}.$$
If we multiply this with an $S$-transform which is a power series in $z$ then the result
is again a series of the form
$$\frac 1{\sqrt z} \cdot \text{power series in $\sqrt z$},$$
which can, as in our proposition, be uniquely resolved for the corresponding moment
series. Let us take now, on the other hand, the multiplicative convolution of two
semicirculars. Then we have
$$S_\mu(z)S_\mu(z)=\frac 1z$$
which results in a $\psi$ transform of the form
$$\psi(z)={\frac 1z-1}.$$
This, however, is not a valid moment series. On the other hand, $\delta_0$ would have the
moment series $\psi(z)=0$, for which no corresponding $S$-transform exists. So we see
that in the case where we multiply two free operators with mean zero the $S$-transform
machinery breaks totally down.

\section{Some examples}

Consider the free multiplicative convolution of the semi-circle distribution with the free Poisson distribution. The semi-circle distribution has $S$-transform
\[
S_{\mu}(z)=\frac{1}{\sqrt{z}}
\]
while the free Poisson distribution has $S$-transform
\[
S_{\gamma}(z)=\frac{1}{z+1}.
\]
The distribution obtained by their free multiplicative convolution has $S$-transform
\[
S_{\mu \boxtimes \gamma}(z)=\frac{1}{\sqrt{z}(z+1)}.
\]
The Cauchy transform of the probability measure $\mu \boxtimes \gamma$  satisfies the algebraic equation
\[
g^4\,z^2-zg+1=0,
\]
from which we can obtain the density function, plotted in Figure \ref{fig:wishtimeswig}.

Consider now the free multiplicative convolution of the free Poisson distribution with the shifted (by $-\alpha$) free Poisson distribution. The free Poisson distribution (shifted by $-\alpha$) has $S$-transform
\[
S_{\mu}^{\alpha}(z) = {\frac {-z-1+\alpha+\sqrt {{z}^{2}+2\,z+2\,z \alpha+1-2\,\alpha+{\alpha}^{2}}}{2z\alpha}}
\]
which, for $\alpha = 1$ yields
\[
S_{\mu}(z) := S^{1}_{\mu}(z) = \frac{-z+\sqrt{{z}^{2}+4\,z}}{2z}
\]
The distribution obtained by the free multiplicative convolution of the free Poisson distribution with the free Poisson distribution (shifted by $-1$) has $S$-transform
\[
S_{\mu \boxtimes \gamma}(z)= \frac {-z+\sqrt {{z}^{2}+4\,z}}{2z \left( 1+z \right)}.
\]
The Cauchy transform of the probability measure $\mu \boxtimes \gamma$ satisfies the algebraic equation
\[
{g}^{4}{z}^{2}+{z}^{2}{g}^{3}-z{g}^{2}-gz+1 = 0
\]
from which we can obtain the density function, plotted in Figure \ref{fig:wishtimeswish}.

We note that the computations in this section were done using \texttt{RMTool} software \cite{raj:rmtool} based on the algebraic equation based framework for computational free probability developed in \cite{raj04a,raj:thesis}.

\begin{figure}
\centering
\subfigure[The solid line is the density function of the probability measure obtained by the free multiplicative convolution of the semicircle with the free Poisson distribution. The histogram bars are the eigenvalues of the product of a Wigner matrix with a Wishart matrix and were generated using $50 \times 50$ sized random matrices over $4000$ Monte-Carlo trials.]{
\includegraphics[width=4.9in]{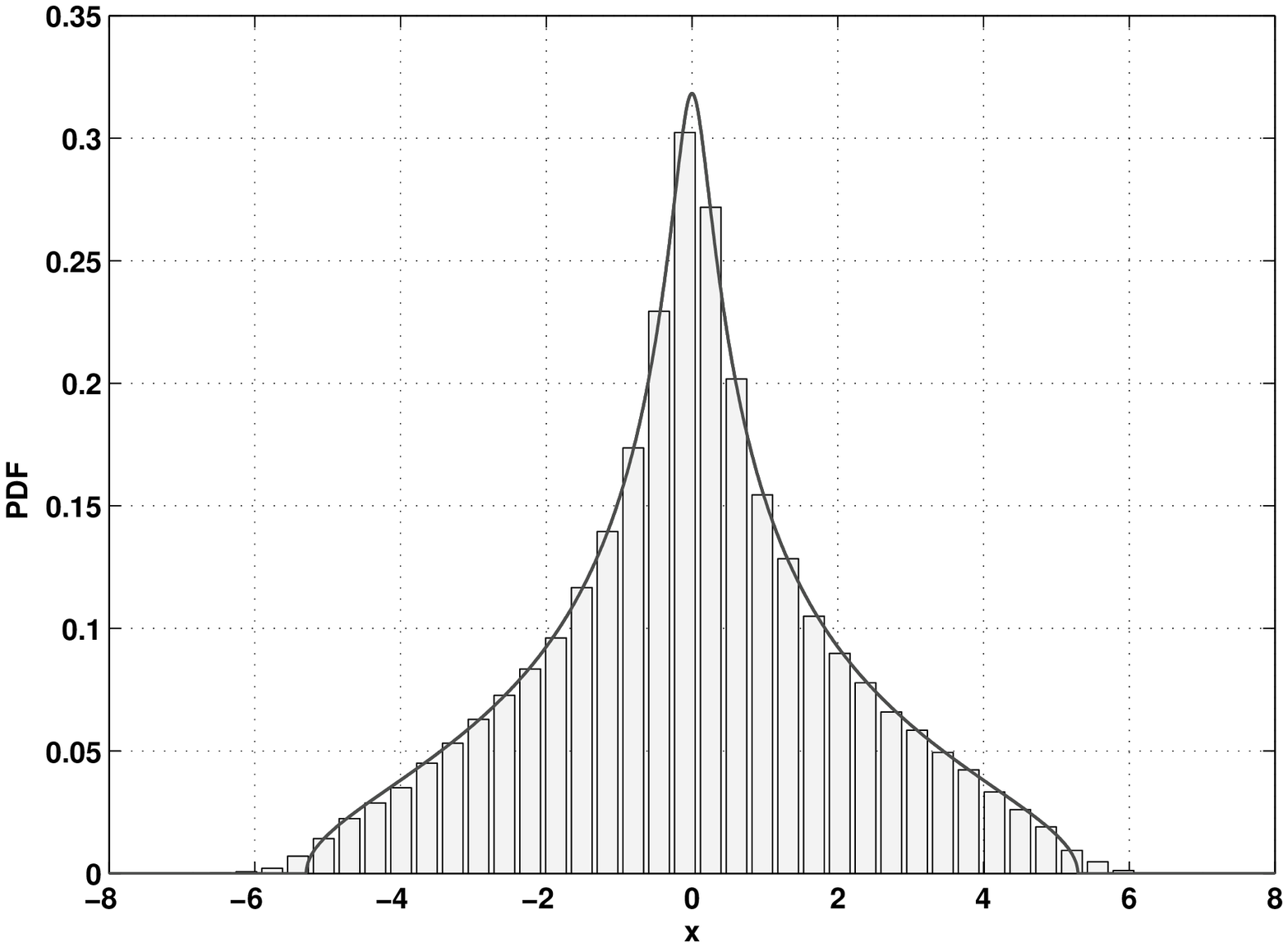}
\label{fig:wishtimeswig}
}\\
\subfigure[The solid line is the density function of the probability measure obtained by the free multiplicative convolution of the free Poisson distribution with the shifted (by $-1$) free Poisson distribution. The histogram bars are the eigenvalues of the product of a Wishart matrix with an independent Wishart matrix (shifted by the negative identity) and were generated using $50 \times 50$ sized random matrices over $4000$ Monte-Carlo trials.]{
\includegraphics[width=4.9in]{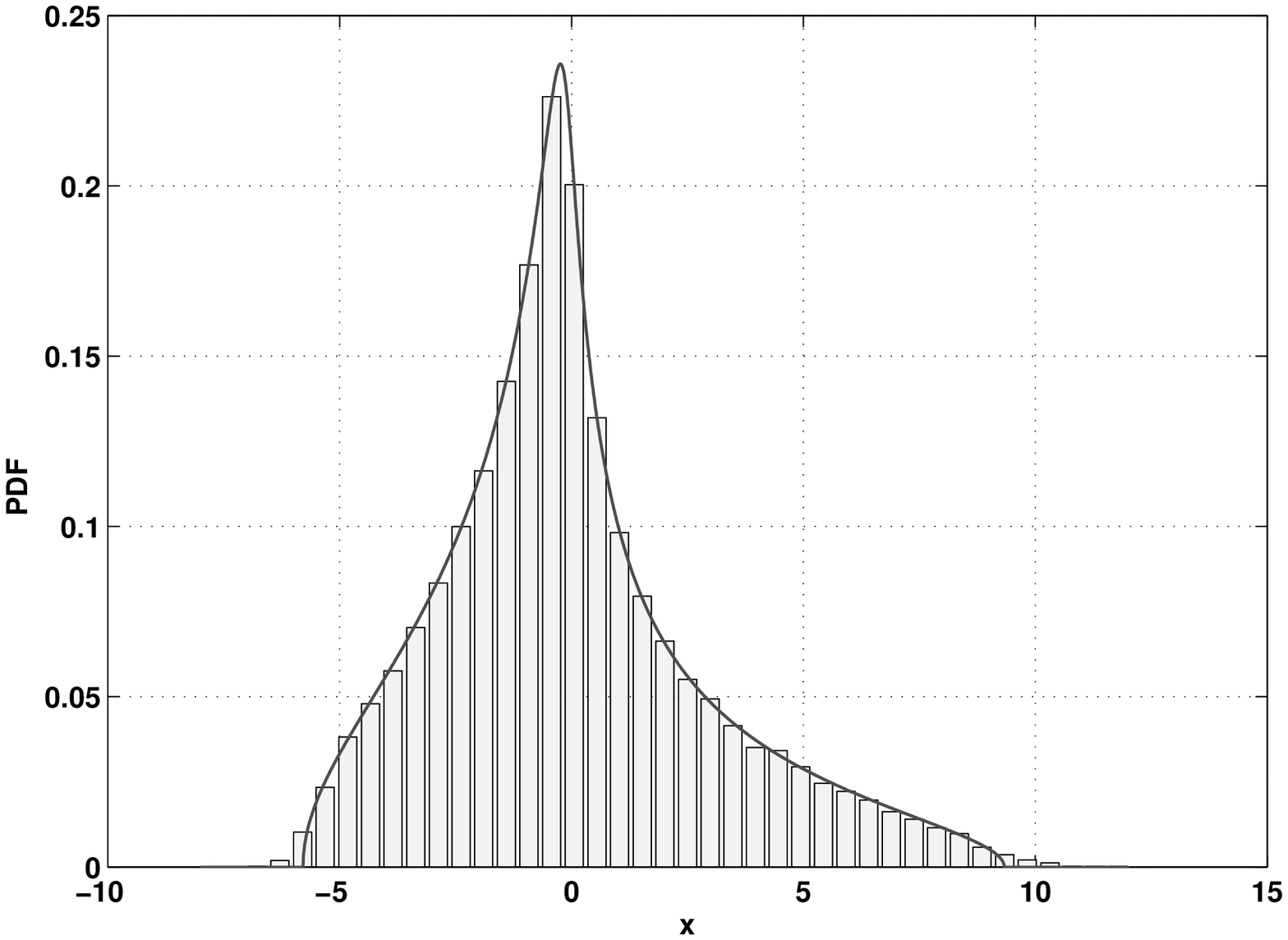}
\label{fig:wishtimeswish}
}
\caption{Examples of free multiplicative convolution with vanishing mean.}
\label{fig:examples}
\end{figure}

\section*{Acknowledgements}
The idea for this paper originated during the authors' correspondence  in the weeks
leading up to March, 2005 Workshop on Free Probability held in Oberwolfach, Germany. We
are grateful to Mathematisches Forschungsinstitut Oberwolfach and the organizers of the
Free Probability Workshop for providing the impetus to interact and their hospitality
during our subsequent stay there.

\bibliographystyle{siam}
\bibliography{randbib}

%\bibitem{VoiSF}
%Voiculescu D (2000) Lectures on free probability theory. In: Bernard P (Ed) Lectures on
%probability theory and statistics, St. Flour XXVIII - 1998. Springer Lecture Notes in
%Mathematics, vol. 1738. Springer, Berlin, pp 279-349

\end{document}